\newcommand{\C}{\mathbb C}
\DeclareMathOperator{\diag}{diag}
\DeclareMathOperator{\argmax}{arg max}
\newcommand{\norm}[1]{\left\Vert #1 \right\Vert}
\newtheorem{theorem}{Theorem}
\newtheorem{lemma}[theorem]{Lemma}
\newcommand{\twotwo}[4]{\begin{bmatrix} #1 & #2 \\ #3 & #4\end{bmatrix}}
\newcommand{\vett}[2]{\begin{bmatrix} #1 \\ #2\end{bmatrix}}
\newcommand{\rowvett}[2]{\begin{bmatrix} #1 & #2\end{bmatrix}}
\newcommand{\abs}[1]{\left\vert#1\right\vert}
\newcommand{\matlab}{Matlab\textregistered{}}
\newcommand{\col}{:}
\DeclareMathOperator{\disp}{\nabla}
\definecolor{darkgray}{gray}{0.4}
\title{A note on the $O(n)$-storage implementation of the GKO algorithm and its adaptation to Trummer-like matrices}
\author{Federico Poloni\thanks{Scuola Normale Superiore; Piazza dei Cavalieri, 7; 56126 Pisa, Italy. E-mail \href{mailto:f.poloni@sns.it}{\texttt{f.poloni@sns.it}}}}
\date{}
\begin{document}
\maketitle
\begin{abstract}
We propose a new $O(n)$-space implementation of the GKO-Cauchy algorithm for the solution of linear systems where the coefficient matrix is Cauchy-like. Moreover, this new algorithm makes a more efficient use of the processor cache memory; for matrices of size larger than $n \approx 500-1000$, it outperforms the customary GKO algorithm.

We present an applicative case of Cauchy-like matrices with non-reconstructible main diagonal. In this special instance, the $O(n)$ space algorithms can be adapted nicely to provide an efficient implementation of basic linear algebra operations in terms of the low displacement-rank generators.
\end{abstract}

\section{Introduction}
Several classes of algorithms for the numerical solution of Toeplitz-like linear systems exist in the literature. We refer the reader to \cite{vbhk} for an extended introduction on this topic, with descriptions of each method and plenty of citations to the relevant papers, and only summarize them in the following table.

\begin{tabular}{cccp{5cm}}
Name & Operations & Memory & Stability\\
\hline
 Levinson & $O(n^2)$ & $O(n)$ & stable only for some symmetric matrices\\
 Schur-Bareiss & $O(n^2)$ & $O(n^2)$ & backward stable only for symmetric, positive definite matrices\\
 GKO & $O(n^2)$ & $O(n^2)$ & stable in practice in most cases\\
 Superfast & $O(n \log^2 n)$ & $O(n)$ & leading constant may be large; may be unstable in the nonsymmetric case
\end{tabular}
The Levinson algorithm is known to be unstable even for large classes of symmetric positive definite matrices \cite{Cybenko}; stabilization techniques such as \emph{look-ahead} may raise the computational cost from $O(n^2)$ to $O(n^3)$ or from $O(n \log^2 n)$ to $O(n^2)$. A mixed approach like the one in the classical FORTRAN code by Chan and Hansen \cite{toms729} bounds the complexity growth, but may fail to remove the instability.
The GKO algorithm is generally stabler \cite{gko95}, even though in limit cases the growth of the coefficients appearing in the Cauchy-like generators may lead to instability. Though superfast Toeplitz solvers have a lower computational cost, when the system matrix is nonsymmetric and ill-conditioned $O(n^2)$ algorithms such as the GKO algorithm \cite{gko95} are still attractive.

In this paper we will deal with the GKO algorithm. It is composed of two steps: reduction of the Toeplitz matrix to a Cauchy-like matrix with displacement rank $r=2$, which takes $O(n)$ memory locations, and $O(n \log n)$ ops, and actual solution of the Cauchy-like system via a generalized Schur algorithm, which takes $O(n^2)$ ops and $O(n^2)$ auxiliary memory locations.

In 1994, Kailath and Chun \cite{kc} showed that it is possible to express the solution of a linear system with Cauchy-like matrix as the Schur complement of a certain structured augmented matrix. In 2006, in a paper on Cauchy-like least squares problems, G.~Rodriguez exploited this idea to design a variation of GKO using only $O(n)$ memory locations.

In the first part of the present  paper, we will provide an alternative $O(n)$-space implementation of the Schur Cauchy-like system solution algorithm, having several desirable computational properties.

Moreover, in some applications, a special kind of partially reconstructible Cauchy-like matrices appear, i.e., those in which the main diagonal is not reconstructible. We shall call them Trummer-like, as they are associated with Trummer's problem \cite{gera}. We will show how the $O(n)$-storage algorithms adapt nicely to this case, allowing one to develop an integrated algorithm for their fast inversion. In particular, one of the key steps in order to obtain a full representation of their inverse is the calculation of $\diag(T^{-1})$ for a given Trummer-like $T$.

\paragraph{Structure of the paper} In \autoref{s:basic}, we will recall the concept of displacement operators, Cauchy-like and Trummer-like matrices. In sections~\ref{s:gko} and \ref{s:ar} we will study respectively the original GKO algorithm and its first $O(n)$-space variant due to Rodriguez \cite{ar}, \cite{rod}. In \autoref{s:dd} we will introduce and analyze our new $O(n)$-space variant. In \autoref{s:trummer} we will deal with system solving and matrix inversion for Trummer-like matrices. Finally, \autoref{s:experiments} is dedicated to showing some numerical experiments that confirm the effectiveness of our approach, and \autoref{s:conclusions} contains some conclusive remarks.

\section{Basic definitions}\label{s:basic}

\paragraph{Indexing and notation} We will make use of some handy matrix notations taken from FORTRAN and \matlab{}. When $M$ is a matrix, the symbol $M_{i \col j, k \col \ell}$ denotes the submatrix formed by rows $i$ to $j$ and columns $k$ to $\ell$ of $M$, including extremes. The index $i$ is a shorthand for $i\col i$, and $\col$ alone is a shorthand for $1 \col n$, where $n$ is the maximum index allowed for that row/column. A similar notation is used for vectors. When $v \in \C^n$ is a vector, the symbol $\diag(v)$ denotes the diagonal matrix $D \in \C^{n \times n}$ such that $D_{i,i}=v_i$. On the other hand, when $M\in\C^{n \times n}$ is a square matrix, $\diag(M)$ denotes the (column) vector with entries $M_{1,1},M_{2,2},\dots,M_{n,n}$.

Throughout the paper, we shall say that a vector $s \in \C^{n}$ is \emph{injective} if $s_i \neq s_j$ for all $i,j=1,2,\dots,n$ such that $i \neq j$. In the numerical experiments, we will denote by $\norm{\cdot}$ the Euclidean 2-norm for vectors and the Frobenius norm for matrices.

\paragraph{Displacement operators and Cauchy-like matrices} Let $t,s \in \C^n$. We shall denote by $\disp_{t,s}$ the operator $\C^{n \times n} \to \C^{n \times n}$ which maps $M$ to
\[
 \nabla_{t,s} (M)= \diag(t)M-M\diag(s).
\]
A matrix $C \in \C^{n\times n}$ is said \emph{Cauchy-like} (with displacement rank $r$) if there are vectors $s,t$ and matrices $G \in \C^{n\times r}, B \in \C^{r\times n}$ such that
\begin{equation}\label{eq:disp}
 \nabla_{s,t}(C)=GB.
\end{equation}
Notice that if we allow $r=n$, then any matrix is Cauchy-like. In the applications, we are usually interested in cases in which $r \ll n$, since the computational cost of all the involved algorithms depends on $r$.

A Cauchy-like matrix is called a \emph{quasi-Cauchy matrix} if $r=1$, and \emph{Cauchy matrix} if $G^*=B=(1,1,\dotsc,1)$. Usually, it is assumed that the operator $\nabla_{t,s}$ is nonsingular, or equivalently, $t_i \neq s_j$ for all pairs $i,j$. Under this assumption, the elements of $C$ can be written explicitly as
\begin{equation}\label{eq:cauchyformula}
 C_{ij}=\frac{\sum_{l=1}^r G_{il} B_{lj}}{t_i-s_j},
\end{equation}
thus $C$ can be fully recovered from $G$, $B$, $t$ and $s$. Otherwise, the latter formula only holds for the entries $C_{ij}$ such that $t_i \neq s_j$, and $C$ is said to be \emph{partially reconstructible}. The matrices $G$ and $B$ are called the \emph{generators} of $C$, and the elements of $t$ and $s$ are called \emph{nodes}. The vectors $t$ and $s$ are called \emph{node vectors}, or \emph{displacement vectors}.

\paragraph{Trummer-like matrices} In \autoref{s:trummer}, we will deal with the case in which $t=s$ is injective, that is, when the non-reconstructible elements are exactly the ones belonging to the main diagonal. We will use $\disp_s$ as a shorthand for $\disp_{s,s}$. If $\nabla_s(T)=GB$ has rank $r$, a matrix $T$ will be called \emph{Trummer-like} (with displacement rank $r$). Notice that a Trummer-like matrix can be fully recovered from $G$, $B$, $s$, and $d=\diag(T)$. Trummer-like matrices are related to interpolation problems \cite{gera}, and may arise from the transformation of Toeplitz and similar displacement structure \cite{ko}, or directly from the discretization of differential problems \cite{bip}.

\section{Overview of the GKO Schur step}\label{s:gko}
\paragraph{Derivation} The fast LU factorization of a Cauchy-like matrix $C$ is based on the following lemma.
\begin{lemma}\cite{ks}
 Let \[C=\twotwo{C_{1,1}}{C_{1,2\col n}}{C_{2\col n,1}}{C_{2\col n, 2\col n}}\] satisfy the displacement equation \eqref{eq:disp}, and suppose $C_{1,1}$ nonsingular. Then its Schur complement $C^{(2)}=C_{2\col n, 2\col n}-C_{2\col n,1} {C_{1,1}}^{-1} C_{1,2\col n}$ satisfies the displacement equation
\[
 \diag(t_{2\col n}  )C^{(2)}-C^{(2)}\diag(s_{2 \col n} )= G^{(2)} B^{(2)},
\]
with
\begin{equation}\label{eq:genupdate}
 G^{(2)}=G_{2\col n,1 \col r}  -C_{2\col n,1}{C_{1,1}}^{-1}G_{1,1 \col r},\, B^{(2)}=B_{1 \col r, 2 \col n} - B_{1 \col r, 1}  {C_{1,1}}^{-1} C_{1,2 \col n}.
\end{equation}
\end{lemma}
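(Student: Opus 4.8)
The plan is to verify the claimed displacement equation for $C^{(2)}$ by brute substitution: expand $\diag(t_{2\col n})C^{(2)} - C^{(2)}\diag(s_{2\col n})$ using the Schur-complement formula $C^{(2)} = C_{2\col n,2\col n} - C_{2\col n,1}{C_{1,1}}^{-1}C_{1,2\col n}$, and rewrite every product of a node matrix with a block of $C$ by means of the block form of \eqref{eq:disp}. No genuinely new idea is needed beyond organizing the bookkeeping so that the spurious terms cancel.

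First I would partition the displacement equation \eqref{eq:disp} conformally with the leading $1\times1$ / $(n-1)\times(n-1)$ splitting of $C$. Since $\diag(t)$ and $\diag(s)$ are block diagonal, this yields four block identities; besides the trailing one
\[
\diag(t_{2\col n})C_{2\col n,2\col n} - C_{2\col n,2\col n}\diag(s_{2\col n}) = G_{2\col n,1\col r}B_{1\col r,2\col n},
\]
the ones I will actually use are the two off-diagonal identities
\begin{align*}
\diag(t_{2\col n})C_{2\col n,1} - C_{2\col n,1}s_1 &= G_{2\col n,1\col r}B_{1\col r,1},\\
t_1 C_{1,2\col n} - C_{1,2\col n}\diag(s_{2\col n}) &= G_{1,1\col r}B_{1\col r,2\col n},
\end{align*}
together with the scalar $(1,1)$ identity $(t_1-s_1)C_{1,1} = G_{1,1\col r}B_{1\col r,1}$.

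Substituting the Schur-complement formula, the trailing identity disposes of the $C_{2\col n,2\col n}$ part immediately, contributing $G_{2\col n,1\col r}B_{1\col r,2\col n}$. For the rank-$r$ correction $-C_{2\col n,1}{C_{1,1}}^{-1}C_{1,2\col n}$ I would feed $\diag(t_{2\col n})C_{2\col n,1}$ and $C_{1,2\col n}\diag(s_{2\col n})$ through the two off-diagonal identities; each substitution produces one term of the shape wanted for $G^{(2)}B^{(2)}$ together with one leftover term carrying a bare factor $s_1$ or $t_1$. The hard part --- in fact the only subtle point --- is that these leftovers combine into a single term proportional to $(t_1-s_1)\,C_{2\col n,1}{C_{1,1}}^{-1}C_{1,2\col n}$, which at first sight matches nothing in $G^{(2)}B^{(2)}$. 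Here the scalar $(1,1)$ identity saves the day: it lets me replace $t_1-s_1$ by $G_{1,1\col r}B_{1\col r,1}{C_{1,1}}^{-1}$, which is legitimate precisely because $C_{1,1}$ is a nonzero scalar and hence commutes freely with the surrounding factors.

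After this replacement the displacement of $C^{(2)}$ is a sum of four matrix products, and comparing it with the expansion of $G^{(2)}B^{(2)}$ for the generators in \eqref{eq:genupdate} shows the two to agree term by term. Since the whole argument is purely algebraic, the only hypothesis invoked is the invertibility of $C_{1,1}$; the nodes enter solely through the three block identities above, so the same computation goes through verbatim for any value of $r$.
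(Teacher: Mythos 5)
Your verification is correct. The paper itself gives no proof of this lemma (it is quoted from the reference [ks]), so there is nothing to compare against; your argument --- partitioning the displacement equation into its four blocks, substituting into the Schur complement, and using the scalar $(1,1)$ identity $(t_1-s_1)C_{1,1}=G_{1,1\col r}B_{1\col r,1}$ to turn the leftover $(t_1-s_1)\,C_{2\col n,1}C_{1,1}^{-1}C_{1,2\col n}$ into the cross term $C_{2\col n,1}C_{1,1}^{-1}G_{1,1\col r}B_{1\col r,1}C_{1,1}^{-1}C_{1,2\col n}$ of $G^{(2)}B^{(2)}$ --- is exactly the standard derivation and checks out term by term. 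One minor remark: your commutation step leans on the pivot being a $1\times 1$ scalar, which suffices here; in the block-pivot version of the lemma the same cancellation still works, but one must keep $C_{1,1}^{-1}$ on the correct sides and write the leftover as $C_{2\col n,1}C_{1,1}^{-1}\bigl(\diag(t_1)C_{1,1}-C_{1,1}\diag(s_1)\bigr)C_{1,1}^{-1}C_{1,2\col n}$ rather than moving the scalar factor around freely.
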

Using this lemma, we can construct the LU factorization of $C$ with $O(n^2)$ floating point operations (ops). The algorithm goes on as follows. Given $G^{(1)}=G$, $B^{(1)}=B$, and the two vectors $s$ and $t$, recover the pivot $C_{1,1}$, the first row $C_{1,2\col n}$ and the first column $C_{2 \col n, 1}$ of $C$ using the formula \eqref{eq:cauchyformula}. This allows to calculate easily the first row of $U$ as $\rowvett{C_{1,1}}{C_{1,2 \col n}}$ and the first column of $L$ as $\rowvett{1}{C_{2 \col n,1}^T {C_{1,1}}^{-1}}^T$. Then use equations \eqref{eq:genupdate} to obtain the generators $G^{(2)}$ and $B^{(2)}$ of the Schur complement $C^{(2)}$ of $C$. Repeat the algorithm setting $G \leftarrow G^{(2)}$, $B \leftarrow B^{(2)}$, $s \leftarrow s_{2 \col n}$ and $t \leftarrow t_{2 \col n}$ to get the second row of $U$ and the second column of $L$, and so on. A simple implementation is outlined in \autoref{a:lu}. Note that for the sake of clarity we used two different variables $L$ and $U$; in fact, it is a widely used technique to have them share the same $n \times n$ array, since only the upper triangular part of the matrix is actually used in $U$, and only the strictly lower triangular part in $L$.
\begin{algorithm}[ht]
\begin{algorithmic}
\REQUIRE $G \in \C^{n \times r},\,B\in\C^{r\times n},\,t,s \in \C^n$ \COMMENT{generators of the matrix}\\
\COMMENT{temporary variables: $L,U \in C^{n \times n}$ (can share the same storage space)}
\STATE $L \leftarrow I_n,\,U \leftarrow O_n$
\FOR{$k=1$ to $n-1$}
\STATE $U_{k,\ell}\leftarrow \frac{G_{k,\col}B_{\col,\ell}}{t_k-s_\ell}$ for all $\ell=k$ to $n$
\STATE $L_{\ell,k}\leftarrow U_{k,k}^{-1}\frac{G_{\ell,\col}B_{\col,k}}{t_\ell-s_k}$ for all $\ell=k+1$ to $n$

\STATE $G_{\ell,\col} \leftarrow G_{\ell,\col}-L_{\ell,k}G_{k,\col}$ for all $\ell=k+1$ to $n$
\STATE $B_{\col,\ell} \leftarrow B_{\col,\ell} - U_{k,k}^{-1} B_{\col,k} U_{k,\ell}$ for all $\ell=k+1$ to $n$

\ENDFOR
\STATE $U_{n,n}\leftarrow \frac{G_{n,\col}B_{\col,n}}{t_n-s_n}$

\RETURN $L,U$
\end{algorithmic}
\caption{LU factorization of Cauchy-like matrices \cite{gko95}}\label{a:lu}
\end{algorithm}
When the LU factorization is only used for the solution of a linear system in the form $Cx=b$, with $b \in \C^{n \times m}$, it is a common technique to avoid constructing explicitly $L$, computing instead $L^{-1}b$ on-the-fly as the successive columns of $L$ are computed. This is also possible with the GKO algorithm, as shown in \autoref{a:implicitl}.
\begin{algorithm}[ht]
\begin{algorithmic}
\REQUIRE $G \in \C^{n \times r},\,B\in\C^{r\times n},\,t,s \in \C^n$ \COMMENT{generators of the matrix}
\REQUIRE $b \in \C^{n \times m}$ \COMMENT{right-hand side}\\
\COMMENT{temporary variables: $l\in\C^n,\,U \in \C^{n\times n}$}
\STATE $U \leftarrow O_n$
\STATE $x \leftarrow b$
\FOR{$k=1$ to $n-1$}
\STATE $l_{\ell}\leftarrow \frac{G_{\ell,\col}B_{\col,k}}{t_\ell-s_k}$ for all $\ell=k$ to $n$ 
\STATE $q\leftarrow\argmax_{\ell=k,k+1,\dots,n} \abs{l_{\ell}}$ \COMMENT{Finds pivot position}
\STATE $p\leftarrow l_q$ \COMMENT{pivot}
\IF{p=0} \PRINT 'error: singular matrix'
\ENDIF
\STATE swap $l_k$ and $l_q$; $x_{k,\col}$ and $x_{q,\col}$; $G_{k,\col}$ and $G_{q,\col}$; $t_{k}$ and $t_{q}$
\STATE $U_{k,k}\leftarrow p$
\STATE $U_{k,\ell}\leftarrow \frac{G_{k,\col}B_{\col,\ell}}{t_k-s_\ell}$ for all $\ell=k+1$ to $n$
\STATE $x_{\ell,\col}\leftarrow x_{\ell,\col}- l_{\ell} (p^{-1}x_{k,\col})$ for all $\ell=k+1$ to $n$
\STATE $G_{\ell,\col}\leftarrow G_{\ell,\col}- l_{\ell} (p^{-1}G_{k,\col})$ for all $\ell=k+1$ to $n$
\STATE $B_{\col,\ell}\leftarrow B_{\col,\ell}-p^{-1} B_{\col,k}U_{k,\ell}$ for all $\ell=k+1$ to $n$
\ENDFOR
\STATE $U_{n,n}\leftarrow \frac{G_{n,\col}B_{\col,n}}{t_n-s_n}$
\STATE $x_{n,\col}\leftarrow x_{n,\col}/U_{n,n}$ \COMMENT{start of the back-substitution step}
\FOR{$k=n-1$ down to $1$}
\STATE $x_{k,\col} \leftarrow x_{k,\col}-U_{k,\ell}x_{\ell,\col}$ for all $\ell=k+1$ to $n$
\STATE $x_{k,\col}\leftarrow x_{k,\col}/U_{k,k}$
\ENDFOR
\RETURN $x$
\end{algorithmic}
\caption{Solving a system $Cx=b$ with implicit $L$ factor and pivoting \cite{gko95}}\label{a:implicitl}
\end{algorithm}

\paragraph{Comments} Notice that \autoref{a:implicitl} includes partial pivoting. Its total cost is $(4r+2m+1)n^2+o(n^2)$ ops, when applied to a matrix $C$ with displacement rank $r$ and an $n \times m$ right-hand side. The algorithm works whenever $C$ is a completely reconstructible Cauchy matrix; if it is not the case, when the number of non-reconstructible entries is small, the algorithm can be modified to store and update them separately, see e.g. Kailath and Olshevsky \cite{ko} or \autoref{s:trummer}.

However, there is an important drawback in \autoref{a:implicitl}: while the size of the input and output data is $O(n)$ (for small values of $m$ and $r$), $O(n^2)$ memory locations of temporary storage are needed along the algorithm to store $U$. Therefore, for large values of $n$ the algorithm cannot be effectively implemented on a computer because it does not fit in the RAM. 

Moreover, another important issue is caching. Roughly speaking, a personal computer has about 512 kb--8 Mb of cache memory, where the most recently accessed locations of RAM are copied. Accessing a non-cached memory location is an order of magnitude slower than a cached one. The real behavior of a modern processor is more complicated than this simple model, due to the presence of several different levels of cache, each with its own performance, and instruction pipelines \cite{HennessyPatterson}. Nevertheless, this should highlight that when the used data do not fit anymore into the cache, saving on memory could yield a greater speedup than saving on floating point operations.

\section{Low-storage version of GKO: the extended matrix approach}\label{s:ar}
\paragraph{Derivation}
The following algorithm to solve the high storage issue in GKO was proposed by Rodriguez \cite{rod} in 2006, while dealing with least squares Cauchy-like problems. More recently, a deeper analysis and a ready-to-use Matlab implementation were provided by Aric\`o and Rodriguez \cite{ar}.

The approach is based on an idea that first appeared in Kailath and Chun \cite{kc}. Let us suppose that $C$ is a completely reconstructible Cauchy-like matrix and that $s$ is injective. The solution of the linear system $Cx=b$ can be expressed as the Schur complement of $C$ in the rectangular matrix
\[
 \widetilde C = \twotwo{C}{b}{-I}{0}.
\]
Thus, we can compute $x$ by doing $n$ steps of Gaussian elimination on $\widetilde C$. Moreover, the first block column of $\widetilde C$ is a partially reconstructible Cauchy-like matrix with respect to $\widetilde s=\rowvett{t^T}{s^T}^T$ and $t$; therefore, while performing the Gaussian elimination algorithm, the entries of this block can be stored and updated in terms of the generators, as in \autoref{a:lu}. Unlike the previous algorithms, we may discard the rows of $U$ and columns of $L$ as soon as they are computed, keeping only the generators. Instead, the entries in the second block column are computed with  customary Gaussian elimination and stored along all the algorithm.

The following observations, which will be needed later, should make clearer what is going on with this approach.
\begin{lemma}\label{th:em-structure}
Suppose for simplicity that no pivoting is performed; let $L$ and $U$ be the LU factors of $C$, $x$ be the solution to the linear system $Cx=b$, $y$ be the solution to $Ly=b$, and $W=U^{-1}$. Let $k$ denote the step of Gaussian elimination being performed, with e.g. $k=1$ being the step that zeroes out all the elements of the first column but the first.
During the algorithm,
 \begin{enumerate}
 \item The $(i,j)$ entry of the $(1,1)$ block is updated at all steps $k$ with $k<\min(i,j+1)$. After its last update, it contains $U_{i,j}$.
 \item The $(i,j)$ entry of the $(1,2)$ block is updated at all steps $k$ with $k<i$. After its last update, it contains $y_{i,j}$.
 \item The $(i,j)$ entry of the $(2,2)$ block is updated at all steps $k$ with $k \geq i$. In particular, the last step ($k=n$) updates all entries, and after that the $(2,2)$ block contains $x_{i,j}$.
 \item The $(i,j)$ entry of the $(2,1)$ block is updated at all steps $k$ with $i \leq k\leq j$. After its last update, it contains $0$. Immediately before that, i.e., just after step $j-1$, it contains $-W_{i,j}U_{j,j}$.
 \end{enumerate}
\end{lemma}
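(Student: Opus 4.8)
The plan is to follow the array through the elimination: write $M^{(k)}$ for its contents at the start of step $k$ (so $M^{(1)} = \widetilde C$ and $M^{(n+1)}$ is the final reduced array), and prove the four assertions by tracking how each of the four blocks of $M^{(k)}$ evolves. Every step $k$ performs, for rows $r > k$ and columns $c \geq k$, the update $M^{(k+1)}_{r,c} = M^{(k)}_{r,c} - (M^{(k)}_{r,k}/M^{(k)}_{k,k})\,M^{(k)}_{k,c}$, with pivot $M^{(k)}_{k,k} = U_{k,k}$; the whole proof is an induction on $k$ built on this single formula. I would record at the outset the two standard facts that drive the top blocks: the top row $i$ is finalized exactly after step $i-1$ (from step $i$ onwards it is a pivot row and is never touched again), and reducing $[\,C \mid b\,]$ by these multipliers is left-multiplication by $L^{-1}$, which turns it into $[\,U \mid L^{-1}b\,] = [\,U \mid y\,]$.

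Parts 1 and 2 then follow directly. The $(i,j)$ entry of the $(1,1)$ block is touched at step $k$ precisely when $k < i$ and $k \leq j$, i.e.\ $k < \min(i,j+1)$, and its terminal value is the corresponding entry of $U$ --- which is $0$ below the diagonal, matching the zeros created by the elimination. The $(i,j)$ entry of the $(1,2)$ block is touched at steps $k < i$ and ends at $y_{i,j}$, again because row $i$ freezes after step $i-1$.

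The substance of the lemma is in Parts 3 and 4, and here I would fix a bottom row $n+i$ and analyse it alone. First I would show this row is untouched until step $i$: initially its only nonzero sits in column $i$, and an easy induction shows no earlier step creates fill-in in columns $< i$, so its pivot-column multiplier vanishes before step $i$ and equals $M^{(i)}_{n+i,i}/U_{i,i} = -1/U_{i,i}$ at step $i$. Letting $a_k$ denote the pivot-column entry of this row at step $k$ (so $a_i = -1$, and $a_j$ is the value just before the entry is zeroed), unrolling the updates over $k = i, \dots, j-1$ gives, for the left block, $a_j = -\sum_{k=i}^{j-1} (a_k/U_{k,k})\,U_{k,j}$, and for the right block the same recursion with $y_{k,j}$ in place of $U_{k,j}$. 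An induction on $j$ then yields $a_j = -W_{i,j}U_{j,j}$, the inductive step collapsing to one row of the identity $\sum_{k=i}^{j} W_{i,k}U_{k,j} = \delta_{i,j}$, i.e.\ of $WU = I$; this is exactly the value asserted in Part 4 just after step $j-1$, and at step $j$ the entry lies in the pivot column and is set to $0$. For Part 3, substituting $a_k = -W_{i,k}U_{k,k}$ turns each multiplier $a_k/U_{k,k}$ into $-W_{i,k}$, so the right-block entry accumulates $\sum_{k=i}^{n} W_{i,k}y_{k,j}$ over steps $i,\dots,n$; extending the sum by the vanishing terms $W_{i,k} = 0$ for $k < i$ gives $(Wy)_{i,j} = (U^{-1}L^{-1}b)_{i,j} = (C^{-1}b)_{i,j} = x_{i,j}$, as claimed.

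I expect the main obstacle to be bookkeeping rather than depth: correctly identifying, block by block, the first and last steps at which an entry is modified (keeping \emph{the update is applied} distinct from \emph{the value changes}), and then recognizing that the multipliers accumulated in a bottom row are precisely the entries $-W_{i,k}$ of $U^{-1}$, so that the sums close up against $WU = I$. A minor but necessary caveat is that, as in the statement, I would carry out the whole analysis in the no-pivoting setting; reinstating pivoting only composes the picture with a row permutation and does not affect the identities above.
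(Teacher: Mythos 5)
Your proposal is correct, and for the substantive parts (3 and 4) it takes a genuinely different route from the paper. The paper gets part 3 for free by citing the standard fact that $n$ steps of Gaussian elimination leave the Schur complement in the trailing block, and then deduces part 4 \emph{indirectly}: writing the final $(2,2)$ entry as $x_{i,j}=-\sum_k (Z_{i,k}/U_{k,k})y_{k,j}$ and comparing with $x_{i,j}=\sum_k W_{i,k}y_{k,j}$, it argues that since both identities hold for every right-hand side $b$ (hence every $y$) while $Z_{i,k}$ and $W_{i,k}$ depend only on $C$, the coefficients must agree, giving $W_{i,k}=-Z_{i,k}/U_{k,k}$. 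You instead prove part 4 \emph{first and directly}, by induction on $j$ along the bottom row $n+i$: the recursion $a_j=-\sum_{k=i}^{j-1}(a_k/U_{k,k})U_{k,j}$ with $a_i=-1$ closes up against the row identity $\sum_{k=i}^{j}W_{i,k}U_{k,j}=\delta_{i,j}$ of $WU=I$, and part 3 then falls out by substituting $a_k/U_{k,k}=-W_{i,k}$ into the right-block recursion and summing to $(Wy)_{i,j}=x_{i,j}$. Your version is more self-contained (it needs neither the Schur-complement fact nor the ``vary $b$'' uniqueness argument, whose validity rests on $y$ ranging over all of $\C^n$ as $b$ does) at the cost of a slightly longer explicit computation; the paper's version is shorter but leans on two external facts. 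Both are sound, and your bookkeeping of which steps touch which entries matches the statement exactly.
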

\begin{proof}
 From the structure of Gaussian elimination, it can easily be verified that the entries are only updated during the abovementioned steps. In particular, for the condition on updates to the $(2,1)$ block, it is essential that the initial $(2,1)$ block initially contains a diagonal matrix. Regarding which values appear finally in each position,
\begin{enumerate}
 \item is obvious: in fact, if we ignore all the other blocks, we are doing Gaussian elimination on $C$.
 \item is easily proved: since the row operations we perform transform $C=LU$ to $U$, they must be equivalent to left multiplication by $L^{-1}$.
 \item is a consequence of the well-known fact that after $n$ steps of Gaussian elimination we get the Schur complement of the initial matrix in the trailing diagonal block.
 \item is less obvious. Let us call $Z_{i,k}$ the value of the $(i,k)$ entry of the $(2,1)$ block right after step $k-1$, and consider how the entries of the $(2,2)$ block are updated along the algorithm. They are initially zero, and at the $k$th step the one in place $(i,j)$ is incremented by $-(Z_{i,k}/U_{k,k})y_{k,j}$, so its final value is
\[
x_{i,j}=-\sum_k (Z_{i,k}/U_{k,k})y_{k,j}.
\]
Since for each choice of $b$ (and thus of $y=L^{-1}b$) $Z_{i,k}$ and $W_{i,k}$ are unchanged, as they only depend on $C$, and it holds that
\[
 x_{i,j}=(U^{-1}y)_{i,j}= \sum_k W_{i,k}y_{k,j},
\]
the only possibility is that $W_{i,k}=-Z_{i,k}/U_{k,k}$ for each $i$, $k$.
\end{enumerate}
\end{proof}
We report here the resulting \autoref{a:ar}.
\begin{algorithm}[ht]
\begin{algorithmic}
\REQUIRE $G \in \C^{n \times r},\,B\in\C^{r\times n},\,t,s \in \C^n$ \COMMENT{generators of the matrix}
\REQUIRE $b \in \C^{n \times m}$ \COMMENT{right-hand side}\\
\COMMENT{temporary variables: $l,u\in\C^n$}
\STATE $x \leftarrow b$
\FOR{$k=1$ to $n-1$}
\STATE $l_{\ell}\leftarrow \frac{G_{\ell,\col}B_{\col,k}}{s_\ell-s_k}$ for all $\ell=1$ to $k-1$
\STATE $l_{\ell}\leftarrow \frac{G_{\ell,\col}B_{\col,k}}{t_\ell-s_k}$ for all $\ell=k$ to $n$
\STATE $q\leftarrow\argmax_{\ell=k,k+1,\dots,n} \abs{l_{\ell}}$ \COMMENT{Finds pivot position}
\STATE $p\leftarrow l_q$ \COMMENT{pivot}
\IF{p=0} \PRINT 'error: singular matrix'
\ENDIF
\STATE swap $l_k$ and $l_q$; $x_{k,\col}$ and $x_{q,\col}$; $G_{k,\col}$ and $G_{q,\col}$; $t_{k}$ and $t_{q}$
\STATE $u_{\ell}\leftarrow \frac{G_{k,\col}B_{\col,\ell}}{t_k-s_\ell}$ for all $\ell=k+1$ to $n$
\STATE $x_{k,\col}\leftarrow p^{-1}x_{k,\col}$
\STATE $x_{\ell,\col}\leftarrow x_{\ell,\col}-l_{\ell}x_{k,\col}$ for all $\ell \neq k$
\STATE $G_{k,\col}\leftarrow p^{-1}G_{k,\col}$
\STATE $G_{\ell,\col}\leftarrow G_{\ell,\col}-l_{\ell}G_{k,\col}$ for all $\ell \neq k$
\STATE $B_{\col,\ell}\leftarrow B_{\col,\ell}-p^{-1} B_{\col,k}u_{\ell}$ for all $\ell=k+1$ to $n$
\ENDFOR
\STATE $l_{\ell}\leftarrow \frac{G_{\ell,\col}B_{\col,k}}{s_\ell-s_k}$ for all $\ell=1$ to $n-1$
\STATE $p\leftarrow \frac{G_{n,\col}B_{\col,n}}{t_n-s_n}$
\STATE $x_{n,\col}\leftarrow p^{-1}x_{n,\col}$
\STATE $x_{\ell,\col}\leftarrow x_{\ell,\col}-l_{\ell}x_{n,\col}$ for all $\ell \neq n$
\RETURN $x$
\end{algorithmic}

\caption{Solving a system $Cx=b$ with the extended matrix algorithm \cite{ar}}\label{a:ar}
\end{algorithm}
\paragraph{Comments}
It is worth mentioning that several nice properties notably simplify the implementation.
\begin{itemize}
 \item The partial reconstructibility of $\widetilde C$ is not an issue. If the original matrix $C$ is fully reconstructible and $s$ is injective, then the non-reconstructible entries of $\widetilde C$ are the ones in the form $C(n+k,k)$ for $k=1,\dots,n$, that is, the ones in which the $-1$ entries of the $-I$ block initially lie. It is readily shown that whenever the computation of such entries is required, their value is the initial one of $-1$.
 \item At each step of the algorithm, the storage of only $n$ rows of $G$ and of the right block column $x$ is required: at step $k$, we only need the rows with indices from $k$ to $n+k-1$ (as the ones below are still untouched by the algorithm, and the ones above are not needed anymore). It is therefore possible to reuse the temporary variables to store the rows modulo $n$, thus halving the storage space needed for some of the matrices.
 \item Pivoting can be easily included without destroying the block structure by acting only on the rows belonging to the first block row of $\widetilde C$.
\end{itemize}

\autoref{a:ar} uses $(6r+2m+\frac 32)n^2+o(n^2)$ floating point operations, and it can be implemented so that the input variables $G$, $B$, $t$, $b$ are overwritten during the algorithm, with $x$ overwriting $b$, so that it only requires $2n$ memory locations of extra storage (to keep $l$ and $u$).

As we stated above, for the algorithm to work we need the additional assumption that $s$ is injective, i.e., $s_i \neq s_j$ for all $j$. This is not restrictive when working with Cauchy-like matrices derived from Toeplitz matrices or from other displacement structured matrices; in fact, in this case the entries $s_i$ are the $n$ complex $n$th roots of a fixed complex number, thus not only are they different, but their differences $s_i-s_j$ can be easily bounded from below, which is important to improve the stability of the algorithm. This is a common assumption when dealing with Cauchy matrices, since a Cauchy (or quasi-Cauchy) matrix is nonsingular if and only if $x$ and $y$ are injective. For Cauchy-like matrices this does not hold, but the injectivity of the two vectors is still related to the singularity of the matrix: for instance, we have the following result.
\begin{lemma}
Let $s$ have $r+1$ repeated elements, that is, $s_{i_1}=s_{i_2}=\dots=s_{i_{r+1}}=s$. Then the Cauchy-like matrix \eqref{eq:cauchyformula} is singular.
\end{lemma}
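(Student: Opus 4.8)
The plan is to produce an explicit nontrivial linear combination of the $r+1$ columns of $C$ indexed by $i_1,\dots,i_{r+1}$ that vanishes; this exhibits a linear dependence among columns and forces $C$ to be singular. Write $s$ for the common value $s_{i_1}=s_{i_2}=\dots=s_{i_{r+1}}$, and for any column index $j$ let $b_j=B_{\col,j}\in\C^r$ denote the $j$th column of the right generator. The whole argument is really a rank count: the columns attached to the repeated node are confined to an $r$-dimensional subspace, but there are $r+1$ of them.

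First I would record that, by the reconstructibility formula \eqref{eq:cauchyformula}, whenever $s_j=s$ the $j$th column of $C$ factors as
\[
C_{\col,j}=D\,G\,b_j,\qquad D:=\diag\!\left(\frac{1}{t_1-s},\dots,\frac{1}{t_n-s}\right),
\]
where the single diagonal matrix $D\in\C^{n\times n}$ does \emph{not} depend on $j$ precisely because all these columns share the same node $s$. The factor $DG$ lies in $\C^{n\times r}$, so $\operatorname{range}(DG)$ has dimension at most $r$. Next, the $r+1$ vectors $b_{i_1},\dots,b_{i_{r+1}}$ live in $\C^r$ and are therefore linearly dependent: there exist scalars $\alpha_1,\dots,\alpha_{r+1}$, not all zero, with $\sum_k \alpha_k b_{i_k}=0$. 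Applying the linear map $v\mapsto DGv$ then gives $\sum_k \alpha_k C_{\col,i_k}=DG\bigl(\sum_k \alpha_k b_{i_k}\bigr)=0$, a nontrivial dependence among $r+1$ columns of $C$, whence $C$ is singular.

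The computation is entirely routine, so I do not expect a genuine obstacle; the only point that needs care is the well-definedness of $D$, i.e. that $t_i\neq s$ for every $i$, which is exactly the partial-reconstructibility hypothesis ensuring that \eqref{eq:cauchyformula} does compute the entries $C_{i,i_k}$, and which is implied by the standing assumption that $\nabla_{t,s}$ is nonsingular ($t_i\neq s_j$ for all $i,j$). I would also note that running the same argument on rows shows the statement is insensitive to whether the repeated nodes are read as belonging to $s$ or to $t$, so nothing beyond this bookkeeping is required.
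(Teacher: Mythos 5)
Your argument is correct and is essentially the paper's own proof: the paper extracts the $n\times(r+1)$ submatrix $C'$ of the columns indexed by $i_1,\dots,i_{r+1}$ and factors it as $G'B'$ with $G'\in\C^{n\times r}$ (your $DG$) and $B'\in\C^{r\times(r+1)}$, concluding that $C'$ cannot have full column rank; your explicit kernel vector $(\alpha_1,\dots,\alpha_{r+1})$ of $B'$ is just the same rank count made concrete. Nothing further is needed.
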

\begin{proof}
Consider the submatrix $C'$ formed by the $r+1$ columns of $C$ with indices $i_1,\dotsc,i_{r+1}$. It is the product of the two matrices $G' \in \C^{n \times r}$ and $B' \in \C^{r \times r+1}$, with
\[
 (G')_{ij}=\frac{G_{ij}}{t_i-s}, \, (B')_{ij}=B_{is_j}.
\]
Therefore $C'$ (and thus $C$) cannot have full rank.
\end{proof}

\section{Low-storage version of GKO: the downdating approach}\label{s:dd}
\paragraph{Derivation} In this section, we shall describe a different algorithm to solve a Cauchy-like system using only $O(n)$ locations of memory. Our plan is to perform the first \textbf{for} loop in \autoref{a:implicitl} unchanged, thus getting $y=L^{-1}b$, but discarding the computed entries of $U$ which would take $O(n^2)$ memory locations, and then to recover them via additional computations on the generators.

For the upper triangular system $Ux=y$ to be solved incrementally by back-substitution, we need the entry of the matrix $U$ to be available one row at a time, starting from the last one, and \emph{after} the temporary value $y=L^{-1}b$ has been computed, that is, after the whole $LU$ factorization has been performed.

Let $G^{(k)}$ ($B^{(k)}$) denote the contents of the variable $G$ (resp. $B$) after step $k$. The key idea is trying to undo the transformations performed on $B$ step by step, trying to recover $B^{(k)}$ from $B^{(k+1)}$. Because of the way in which the generators are updated in Algorithms \ref{a:lu} and \ref{a:implicitl}, the first row of $G^{(k)}$ and the first column of $B^{(k)}$ are kept in memory untouched by iterations $k+1,\dotsc,n$ of the GKO algorithm. Thus we can use them in trying to undo the $k$th step of Gaussian elimination.

Let us suppose we know $B^{(k+1)}$, i.e., the contents of the second generator $B$ after the $(k+1)$st step of Gaussian elimination, and the values of $G^{(k)}_{k,\col}$ and $B^{(k)}_{\col,k}$, which are written in $G$ and $B$ by the $k$th step of Gaussian elimination and afterwards unmodified (since the subsequent steps of \autoref{a:implicitl} do not use those memory locations anymore).

We start from the second equation of \eqref{eq:genupdate} and \eqref{eq:cauchyformula} for the $k$th row of $U$, written using the colon notation for indices.
\[
\begin{aligned}
 B^{(k+1)}_{\col,\ell} &= B^{(k)}_{\col,\ell} - B^{(k)}_{\col,k}{U_{k,k}}^{-1}U_{k,\ell}, & \ell &> k,\\
 U_{k,\ell}&=\frac{G^{(k)}_{k,\col} B^{(k)}_{\col,\ell}}{t_k-s_\ell}, & \ell &\geq k.
\end{aligned}
\]
Substituting $B^{(k)}_{\col,\ell}$ from the first into the second, and using the $k=\ell$ case of the latter to deal with $U_{k,k}$, we get
\[
 U_{k,\ell}=\frac{G^{(k)}_{k,\col} B^{(k+1)}_{\col,\ell}}{t_k-s_\ell}+\frac{G^{(k)}_{k,\col} B^{(k)}_{\col,k} {U_{k,k}}^{-1}}{t_k-s_\ell} U_{k,\ell} = \frac{G^{(k)}_{k,\col} B^{(k+1)}_{\col,\ell}}{t_k-s_\ell} + \frac{t_k-s_k}{t_k-s_\ell}U_{k,\ell}
\]
and thus
\begin{align}
 U_{k,\ell}&=\frac{G^{(k)}_{k,\col} B^{(k+1)}_{\col,\ell}}{s_k-s_\ell}, & \ell &\geq k, \label{uformula}\\
  B^{(k)}_{\col,\ell} &= B^{(k+1)}_{\col,\ell} + B^{(k)}_{\col,k}{U_{k,k}}^{-1}U_{k,\ell}, & \ell &> k.
\end{align}
The above equations allow one to recover the value of $B^{(k)}_{\col,\ell}$ for all $\ell >k$ using only $B^{(k+1)}_{\col,\ell}$, $G^{(k)}_{k,\col}$ and $B^{(k)}_{\col,k}$ as requested.

By applying the method just described repeatedly for $k=n-1,n-2,\dots,1$, we are able to recover one at a time the contents of $B^{(n-1)},B^{(n-2)},\dots, B^{(1)}$, which were computed (and then discarded) in the first phase of the algorithm. I.e., at each step we ``downdate'' $B$ to its previous value, reversing the GKO step. In the meantime, we get at each step $k=n-1,n-2,\dots,1$ the $k$th row of $U$. In this way, the entries of $U$ are computed in a suitable way to solve the system $Ux=y$ incrementally by back-substitution.

 We report here the resulting \autoref{a:dd}.
\begin{algorithm}[ht]
\begin{algorithmic}
\REQUIRE $G \in \C^{n \times r},\,B\in\C^{r\times n},\,t,s \in \C^n$ \COMMENT{generators of the matrix}
\REQUIRE $b \in \C^{n \times m}$ \COMMENT{right-hand side}\\
\COMMENT{temporary variables: $l,u\in\C^n$}
\STATE $x \leftarrow b$
\FOR{$k=1$ to $n-1$}
\STATE $l_{\ell}\leftarrow \frac{G_{\ell,\col}B_{\col,k}}{t_\ell-s_k}$ for all $\ell=k$ to $n$ 
\STATE $q\leftarrow\argmax_{\ell=k,k+1,\dots,n} \abs{l_{\ell}}$ \COMMENT{Finds pivot position}
\STATE $p\leftarrow l_q$ \COMMENT{pivot}
\IF{p=0} \PRINT 'error: singular matrix'
\ENDIF
\STATE swap $l_k$ and $l_q$; $x_{k,\col}$ and $x_{q,\col}$; $G_{k,\col}$ and $G_{q,\col}$; $t_{k}$ and $t_{q}$
\STATE $u_k\leftarrow p$
\STATE $u_{\ell}\leftarrow \frac{G_{k,\col}B_{\col,\ell}}{t_k-s_\ell}$ for all $\ell=k+1$ to $n$
\STATE $x_{\ell,\col}\leftarrow x_{\ell,\col}-p^{-1} l_{\ell} x_{k,\col}$ for all $\ell=k+1$ to $n$
\STATE $G_{\ell,\col}\leftarrow G_{\ell,\col}-p^{-1} l_{\ell} G_{k,\col}$ for all $\ell=k+1$ to $n$
\STATE $B_{\col,\ell}\leftarrow B_{\col,\ell}-p^{-1} B_{\col,k}u_{\ell}$ for all $\ell=k+1$ to $n$
\ENDFOR
\STATE $u_{n}\leftarrow \frac{G_{n,\col}B_{\col,n}}{t_n-s_n}$
\STATE $x_{n,\col}\leftarrow x_{n,\col}/u_{n}$ \COMMENT{start of the back-substitution step}
\FOR{$k=n-1$ down to $1$}
\STATE $u_\ell\leftarrow \frac{G_{k,\col}B_{\col,\ell}}{s_k-s_\ell}$ for all $\ell=k+1$ to $n$
\STATE $B_{\col,\ell} \leftarrow B_{\col,\ell}+u_k^{-1}B_{\col,k}u_\ell$ for all $\ell=k+1$ to $n$
\STATE $x_{k,\col} \leftarrow x_{k,\col}-u_{\ell}x_{\ell,\col}$ for $\ell=k+1$ to $n$
\STATE $x_{k,\col}\leftarrow x_{k,\col}/u_{k}$
\ENDFOR
\RETURN $x$ 
\end{algorithmic}
\caption{Solving a system $Cx=b$ with the downdating algorithm}\label{a:dd}
\end{algorithm}
\paragraph{Comments} Notice that pivoting only affects the first phase of the algorithm, since the whole reconstruction stage can be performed on the pivoted version of $C$ without additional row exchanges.

This algorithm has the same computational cost, $(6r+2m+\frac 32)n^2+o(n^2)$, and needs the same number of memory locations, $2n$, as the extended matrix approach. Moreover, they both need the additional property that $s$ be injective, as an $s_k-s_\ell$ denominator appears in \eqref{uformula}. These facts may lead one to suspect that they are indeed the same algorithm. However, it is to be noted the two algorithms notably differ in the way in which the system $Ux=y$ is solved: in the extended matrix approach we solve this system by accumulating the explicit multiplication $U^{-1}y$, while in the downdating approach we solve it by back-substitution. 

Several small favorable details suggest adopting the latter algorithm:
\begin{itemize}
 \item With the extended matrix approach, we do not get any entry of $x$ before the last step. On the other hand, with the downdating approach, as soon as the first \textbf{for} cycle is completed, we get $x_{n}$, and then after one step of the downdating part we get $x_{n-1}$, and so on, getting one new component of the solution at each step. This is useful because in the typical use of this algorithm on Toeplitz matrices, $x$ is the Fourier transform of a ``meaningful'' vector, such as one representing a signal, or an image, or the solution to an equation. Using the correct ordering, the last entries of a Fourier transform can be used to reconstruct a lower-sampled preview of the original data, with no additional computational overhead, see e.g. Walker\cite{walker}. Thus with this approach we can provide an approximate solution after only the first part of the algorithm is completed.
 \item In the extended matrix version, each step of the algorithm updates $O(nr)$ memory locations. Instead, in the downdating version, for each $k$, the $(n-k)$th and $(n+k)$th step work on $O(kr)$ memory locations. Therefore, the ``innermost'' iterations take only a small amount of memory and thus fit better into the processor cache. This is a desirable behavior similar to the one of \emph{cache-oblivious algorithms} \cite{cacheob}.
 \item In exact arithmetic, at the end of the algorithm the second generator $B$ of the matrix $C$ is reconstructed as it was before the algorithm. In floating point arithmetic, this can be used as an \emph{a posteriori} accuracy test: if one or more entries of the final values of $B$ are not close to their initial value, then there was a noticeable algorithmic error.
\end{itemize}

\section{Computations with Trummer-like matrices}\label{s:trummer}
A special class of Cauchy-like matrices which may arise in application \cite{gera,ko,bip,bmp_cr} is that of Trummer-like matrices, i.e., those for which the two node vectors coincide ($t=s$) and are injective. The partial reconstructibility of these matrices requires special care to be taken in the implementation of the solution algorithms. The $O(n)$-storage algorithms we have presented can be adapted to deal with this case. Moreover, it is a natural request to ask for an algorithm that computes the generators of $T^{-1}$ given those of $T$. Such an algorithm involves three different parts: the solution of a linear system with matrix $T$ and multiple right-hand side; the solution of a similar system with matrix $T^*$, and the computation of $\diag(T^{-1})$. We will show that an adaptation of the algorithm presented in \autoref{s:ar} can perform all three at the same time, fully exploiting the fact that these three computations share a large part of the operations involved.

\paragraph{Theoretical results} The following results, which are readily proved by expanding the definition of $\disp_{s}$ on both sides, are simply the adaptation of classical results on displacement ranks (see e.g. Heinig and Rost \cite{hr}) to the Trummer-like case. Notice the formal similarity with the derivative operator.
\begin{theorem}\label{derivate}
 Let $A,B \in \C^{n \times n}$, and let $r(X)=\operatorname{rk} \disp_s(A)$.
\begin{enumerate}
 \item $\disp_s(A+B)=\disp_s(A)+\disp_s(B)$, so $r(A+B) \leq r(A)+r(B)$.
 \item $\disp_s(AB)=\disp_s(A)B+A\disp_s(B)$, so $r(AB) \leq r(A)+r(B)$.
 \item $\disp_s(A^{-1})=-A^{-1}\disp_s(A)A^{-1}$, so $r(A^{-1})=r(A)$.
\end{enumerate}
\end{theorem}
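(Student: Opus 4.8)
The plan is to prove all three identities by direct expansion of $\disp_s(M)=\diag(s)M-M\diag(s)$, and then to read off each rank statement from the identity just obtained. Writing $S=\diag(s)$ for brevity, the operator $\disp_s$ is exactly the commutator $M\mapsto SM-MS$; the three claims are then nothing but the additivity, the Leibniz (product) rule, and the inverse rule for commutators, which is precisely the source of the announced formal analogy with differentiation. In each case the identity itself is a one-line manipulation, so the only real content lies in keeping the rank statements straight.

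For item~1 I would expand $\disp_s(A+B)=S(A+B)-(A+B)S$ and regroup to match $\disp_s(A)+\disp_s(B)$, which is immediate from bilinearity of the matrix product; the bound $r(A+B)\le r(A)+r(B)$ then follows from the subadditivity of rank applied to this sum. For item~2 I would write $\disp_s(AB)=SAB-ABS$ and insert the telescoping pair $\pm ASB$, rewriting the result as $(SA-AS)B+A(SB-BS)=\disp_s(A)B+A\disp_s(B)$. For the rank bound I would fix minimal factorizations $\disp_s(A)=P_AQ_A$ with $P_A\in\C^{n\times r(A)}$ and $\disp_s(B)=P_BQ_B$ with $P_B\in\C^{n\times r(B)}$; then $\disp_s(A)B=P_A(Q_AB)$ has rank at most $r(A)$ and $A\disp_s(B)=(AP_B)Q_B$ has rank at most $r(B)$, so subadditivity of rank gives $r(AB)\le r(A)+r(B)$.

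For item~3, rather than expanding directly I would apply item~2 to the identity $AA^{-1}=I$. Since $\disp_s(I)=S-S=0$, the Leibniz rule yields $0=\disp_s(A)A^{-1}+A\disp_s(A^{-1})$, and solving for $\disp_s(A^{-1})$ gives the claimed $-A^{-1}\disp_s(A)A^{-1}$. The rank equality is then immediate, since left- and right-multiplication by the invertible matrix $A^{-1}$ preserves rank, whence $r(A^{-1})=\operatorname{rk}\disp_s(A)=r(A)$. I do not expect any genuine obstacle here; the one point that deserves care is the asymmetry in the rank conclusions—items~1 and~2 give only inequalities, because cancellation in a sum of displacements or in the product of a generator block with a matrix may drop the rank, whereas item~3 gives an equality precisely because conjugation by an invertible matrix is rank-preserving. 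I would therefore be explicit that the factorizations used in item~2 realize the displacement ranks and that the invertibility of $A$ in item~3 is exactly what upgrades the inequality to an equality.
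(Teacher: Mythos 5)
Your proof is correct and follows essentially the route the paper intends: the paper gives no detailed argument, remarking only that the identities are ``readily proved by expanding the definition of $\disp_s$ on both sides,'' which is exactly your direct expansion with the telescoping term $\pm ASB$ for the product rule. Your derivation of item~3 from the Leibniz rule applied to $AA^{-1}=I$ and your care about which rank statements are inequalities versus equalities are both sound and add nothing inconsistent with the paper's treatment.
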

As we saw in \autoref{s:basic}, a Trummer-like matrix can be completely reconstructed by knowing only the node vector $s$, the generators $G$ and $B$, and its diagonal $d=\diag(T)$. In this section, we are interested in implementing fast---i.e., using $O(n^2)$ ops---and space-efficient---i.e., using $O(n)$ memory locations---matrix-vector and matrix-matrix operations involving Trummer-like matrices stored in this form.

\paragraph{Matrix-vector product} For the matrix-vector product, all we have to do is reconstructing one row at a time of the matrix $T$ and then computing the customary matrix-vector product via the usual formula $(Tv)_i=\sum_j T_{ij}v_j$. Approximate algorithms for the computation of the Trummer-like matrix-vector product with $O(n \log^2 n)$ ops also exist, see e.g. Bini and Pan \cite{binipan}.

\paragraph{Matrix-matrix operations} The matrix product between two Trummer-like matrices $T$ and $S$ is easy to implement: let $G_T$ and $B_T$ (resp. $G_S$ and $B_S$) be the generators of $T$ (resp. $S$); then, by \autoref{derivate}, the generators of $TS$ are
\[
\rowvett{TG_S}{G_T},\quad\vett{B_S}{B_TS},
\]
while $\diag(TS)$ can be computed in $O(n^2)$ by recovering at each step one row of $T$ and one column of $S$ and computing their dot product. Sums are similar: the generators of $S+T$ are
\[
\rowvett{G_S}{G_T},\quad\vett{B_S}{B_T},
\]
and its diagonal is $d_S+d_T$.

\paragraph{Linear systems} Linear system solving is less obvious. Kailath and Olshevsky \cite{ko} suggested the following algorithm: the GKO Gaussian elimination is performed, but at the same time the computed row $U_{k,k\col n}$ and column $L_{k\col n, k}$ are used to update the diagonal $d$ to the diagonal of the Schur complement, according to the customary Gaussian elimination formula
\begin{equation}\label{gaussdiag}
T_{i,i}^{(k+1)} = T^{(k)}_{i,i}-L_{i,k}(T^{(k)}_{k,k})^{-1}U_{k,i}.
\end{equation}
It is easy to see that this strategy can be adapted to both the extended matrix and the downdating version of the algorithm, thus allowing one to implement GKO with $O(n)$ storage also for this class of matrices.

However, a more delicate issue is pivoting. Kailath and Olshevsky do not deal with the general case, since they work with symmetric matrices and with a symmetric kind of pivoting that preserves the diagonal or off-diagonal position of the entries. Let us consider the pivoting operation before the $k$th step of Gaussian elimination, which consists in choosing an appropriate row $q$ and exchanging the $k$th and $q$th rows. The main issue here is that the two non-reconstructible entries that were in position $T_{kk}$ and $T_{qq}$, now are in positions $T_{qk}$ and $T_{kq}$. This requires special handling in the construction of the $k$th row in the Gaussian elimination step, but luckily it does not affect the successive steps of the algorithm, since the $k$th column and row are not used from step $k+1$ onwards. On the other hand, the entry $T_{qq}$, which used to be non-reconstructible before pivoting, is now reconstructible. We may simply ignore this fact, store it in $d$ and update it with the formula \eqref{gaussdiag} as if it were not reconstructible. The algorithm is given here as \autoref{a:trsv}.
\begin{algorithm}[ht]
\begin{algorithmic}
\REQUIRE $G \in \C^{n \times r},\,B\in\C^{r\times n},\,s \in \C^n$ \COMMENT{generators of the matrix $T$}
\REQUIRE $d \in \C^n$ \COMMENT{diagonal of $T$}
\REQUIRE $b \in \C^{n \times m}$ \COMMENT{right-hand side}\\
\COMMENT{temporary variables: $l,u\in\C^n$}\\
\COMMENT{temporary variable: $\sigma \in \mathbb N^n$ vector of integer indices used to keep track of the permutation performed during the pivoting}
\STATE $\sigma(i)\leftarrow i$ for all $i=1$ to $n$
\COMMENT{initializes $\sigma$ as the identity permutation}
\FOR{$k=1$ to $n-1$}
\STATE $l_k\leftarrow d_k$
\STATE $l_{\ell}\leftarrow \frac{G_{\ell,\col}B_{\col,k}}{s_{\sigma(\ell)}-s_k}$ for all $\ell=k+1$ to $n$ 
\STATE $q\leftarrow\argmax_{\ell=k,k+1,\dots,n} \abs{l_{\ell}}$ \COMMENT{Finds pivot position}
\STATE $p\leftarrow l_q$ \COMMENT{pivot}
\IF{p=0} \PRINT 'error: singular matrix'
\ENDIF
\STATE swap $l_k$ and $l_q$; $x_{k,\col}$ and $x_{q,\col}$; $G_{k,\col}$ and $G_{q,\col}$; $\sigma(k)$ and $\sigma(q)$
\STATE $u_k\leftarrow p$
\STATE $u_{\ell}\leftarrow \frac{G_{k,\col}B_{\col,\ell}}{s_{\sigma(k)}-s_\ell}$ for all $\ell=k+1$ to $n$, $\ell\neq q$
\STATE $u_q\leftarrow d_q$ \COMMENT{non-reconstructible entry that moved off-diagonal after pivoting}
\STATE $x_{\ell,\col}\leftarrow x_{\ell,\col}-p^{-1} l_{\ell} x_{k,\col}$ for all $\ell=k+1$ to $n$
\STATE $G_{\ell,\col}\leftarrow G_{\ell,\col}-p^{-1} l_{\ell} G_{k,\col}$ for all $\ell=k+1$ to $n$
\STATE $B_{\col,\ell}\leftarrow B_{\col,\ell}-p^{-1} B_{\col,k}u_{\ell}$ for all $\ell=k+1$ to $n$
\STATE $d_\ell\leftarrow d_\ell-p^{-1}l_\ell u_\ell$ for all $\ell=k+1$ to $n$\COMMENT{Gaussian elimination on the diagonal}
\IF[$d_q$ may be reconstructible after the pivoting --- but we store it explicitly anyway]{$q\neq k$}
\STATE $d_q\leftarrow\frac{G_{q,\col}B_{\col,q}}{s_{\sigma(q)}-s_q}$
\ENDIF
\ENDFOR
\STATE $u_{n}\leftarrow \frac{G_{n,\col}B_{\col,n}}{s_{\sigma(n)}-s_n}$
\STATE $x_{n,\col}\leftarrow x_{n,\col}/u_{n}$ \COMMENT{start of the back-substitution step}
\FOR{$k=n-1$ down to $1$}
\STATE $u_\ell\leftarrow \frac{G_{k,\col}B_{\col,\ell}}{s_k-s_\ell}$ for all $\ell=k+1$ to $n$
\STATE $B_{\col,\ell} \leftarrow B_{\col,\ell}+u_k^{-1}B_{\col,k}u_\ell$ for all $\ell=k+1$ to $n$
\STATE $x_{k,\col} \leftarrow x_{k,\col}-u_{\ell}x_{\ell,\col}$ for $\ell=k+1$ to $n$
\STATE $x_{k,\col}\leftarrow x_{k,\col}/u_{k}$
\ENDFOR
\RETURN $x$ 
\end{algorithmic}
\caption{Solving a system $Tx=b$ with the downdating algorithm}\label{a:trsv}
\end{algorithm}

A similar modification is also possible with the extended matrix version of the algorithm --- we shall see it in more detail in the next paragraph.

\paragraph{Matrix inversion} Matrix inversion poses an interesting problem too. The generators of $T^{-1}$ can be easily computed as $T^{-1}G$ and $-BT^{-1}$ by resorting to Lemma~\ref{a:trsv} applied twice on $T$ and $T^*$. However, whether we try to compute the representation of $T^{-1}$ or directly that of $T^{-1}S$ for another Trummer-like matrix $S$, we are faced with the problem of computing $\diag(T^{-1})$ given a representation of $T$. There appears to be no simple direct algorithm to extract it in time $O(n^2)$ from the LU factors of $T$.

A possible solution could be based on the decomposition $T^{-1}=\diag(f)+F$, with $f$ a vector and $F$ a matrix with $\diag(F)=[0,0,\dots,0]^T$. In fact, notice that $F$ depends only on the generators of the inverse; therefore, after computing them, one could choose any vector $v$ for which $T^{-1}v$ has already been computed (e.g., $G_{:,1}$) and solve for the entries of $f$ in the equation $T^{-1}v=\diag(f)v+Fv$. This solution was attempted in \cite{bmp_cr}, but was found to have unsatisfying numerical properties.

We shall present here a different solution based on the observations of \autoref{th:em-structure}, that will allow us to compute the diagonal together with the inversion algorithm. Let us ignore pivoting in this first stage of the discussion. Notice that the last part of \autoref{th:em-structure} shows us a way to compute $(U^{-1})_{1 \col k,k}$ at the $k$th step of the extended matrix algorithm. Our plan is to find a similar way to get $(L^{-1})_{k,1\col k}$ at the same step, so that we can compute the sums 
\begin{equation}\label{sumdiag}
(T^{-1})_{i,i}=\sum_k (U^{-1})_{i,k} (L^{-1})_{k,i}, \quad i=1,\dots,n,
\end{equation}
one summand at each step, accumulating the result in a temporary vector.

The following result holds.
\begin{lemma}
 Let $T$ be Trummer-like with generators $G$ and $B$, nodes $s$ and diagonal $d$, $T=LU$ be its LU factorization, and $D=\diag(p)$, where $p_i=U_{i,i}$ are the pivots.
\begin{enumerate}
 \item The LU factorization of $T^*$, the transpose conjugate of $T$, is $(U^*D^{-1})(DL^*)$.
 \item The matrix $T^*$ is Trummer-like with nodes $s$, diagonal $d$ and generators $B^*$ and $G^*$.
 \item Let $G^{(k)}$ and $B^{(k)}$ be the content of the variables $G$ and $B$ after the $k$th step of the GKO algorithm on $T$, and $\bar G^{(k)}$ and $\bar B^{(k)}$ be the content of the same variables after the same step of the GKO algorithm on $T^*$. Then, $\bar G^{(k)}=(B^{(k)})^*$ and $\bar B^{(k)}=(G^{(k)})^*$
\end{enumerate}
\end{lemma}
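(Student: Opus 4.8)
The three parts are logically independent, so I would prove them in order, using each to set up notation for the next. The first part is a pure matrix-factorization identity. Starting from $T = LU$ with $L$ unit lower triangular and $U$ upper triangular, I would simply take the conjugate transpose to get $T^* = U^* L^*$. The factor $U^*$ is lower triangular but not unit-diagonal; its diagonal entries are exactly the conjugates $\overline{p_i}$ of the pivots. To restore the unit-triangular/upper-triangular normal form, I would insert $D^{-1}D$ between the factors, writing $T^* = (U^* D^{-1})(D L^*)$. The point is that $U^* D^{-1}$ is unit lower triangular (its diagonal entries are $\overline{p_i}/p_i$ — here I would note that for the standard LU normalization one takes $D = \diag(p)$ and checks that $U^*D^{-1}$ indeed has unit diagonal, which holds because the $i$th diagonal entry of $U^*$ is $\overline{U_{i,i}} = \overline{p_i}$ and we divide by $p_i$; one must be slightly careful about whether the intended normalization is real pivots or a genuinely Hermitian setup, and I would state the convention explicitly). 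The factor $DL^*$ is upper triangular, so by uniqueness of the LU factorization this is \emph{the} LU factorization of $T^*$.

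For the second part I would expand $\disp_s(T^*)$ directly, exactly as the paper suggests for \autoref{derivate}. Since $\disp_s(T) = \diag(s)T - T\diag(s) = GB$, taking conjugate transposes gives $\disp_s(T)^* = (GB)^* = B^* G^*$. Because $\diag(s)$ is complex but the operator structure is fixed, I would compute $\disp_s(T^*) = \diag(s)T^* - T^*\diag(s)$ and relate it to $\disp_s(T)^*$; the natural identity is $\disp_s(T^*) = -\,\disp_{\bar s}(T)^*$ in general, but since the nodes $s$ appear on both sides and we are told the claim uses the \emph{same} node vector $s$, I would check that the intended statement assumes $s$ real (or works with $\bar s$), and under that convention conclude $\disp_s(T^*) = B^* G^*$, so that $B^*$ and $G^*$ are valid generators. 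The diagonal of $T^*$ is $\overline{d}$, and again the cleanest statement assumes real diagonal or is understood up to conjugation; I would flag this.

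The third part is the real substance and is proved by induction on $k$. The base case $k=0$ is the hypothesis $\bar G^{(0)} = (B^{(0)})^* = B^*$ and $\bar B^{(0)} = (G^{(0)})^* = G^*$, which is exactly part (2). For the inductive step I would write out the GKO generator-update rule \eqref{eq:genupdate} (in the implicit-$L$ form used in \autoref{a:implicitl}) for the run on $T$, producing $G^{(k+1)}, B^{(k+1)}$ from $G^{(k)}, B^{(k)}$, and separately the update for the run on $T^*$ producing $\bar G^{(k+1)}, \bar B^{(k+1)}$ from $\bar G^{(k)}, \bar B^{(k)}$. Substituting the inductive hypothesis $\bar G^{(k)} = (B^{(k)})^*$, $\bar B^{(k)} = (G^{(k)})^*$ into the $T^*$-update, I would verify that the result equals $(B^{(k+1)})^*$ and $(G^{(k+1)})^*$ respectively. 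The mechanism is that the update for the first generator $G$ and the update for the second generator $B$ are formally transpose-conjugate to each other, with the pivot $U_{k,k}$ of the $T$-run matching the pivot $\overline{U_{k,k}}$ of the $T^*$-run (consistent with part (1), where the pivots of $T^*$ are the $\overline{p_i}$). The expected obstacle is precisely this bookkeeping: the two updates in \eqref{eq:genupdate} are not literally symmetric — one uses a row of $G$ and a column of the current matrix, the other a column of $B$ and a row — so I would have to confirm that conjugate-transposing the $T$-update to $B$ lands exactly on the $T^*$-update to $G$, including the scalar pivot factor and the index ranges ($\ell > k$ versus $\ell \geq k$). I would also keep pivoting switched off throughout, as the lemma stipulates, so no permutation needs to be tracked. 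Once the two formal updates are matched entry-for-entry, the induction closes.
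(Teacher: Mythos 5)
Your proposal follows exactly the paper's route: part (1) by inserting $D^{-1}D$ and invoking uniqueness of the LU factorization, part (2) by expanding $\nabla_s$ on both sides, and part (3) by writing out the generator update \eqref{eq:genupdate} for $T$ and $T^*$ and matching the two recursions term by term, so it is essentially the same proof with the details filled in. The conjugation caveats you raise (the diagonal entries $\overline{p_i}/p_i$ of $U^*D^{-1}$, the sign and $\bar s$ appearing in $\nabla_s(T^*)$, and $\bar d$ versus $d$) are genuine and are silently glossed over in the paper's one-line argument, so stating the convention explicitly as you propose is an improvement rather than a detour.
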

\begin{proof}
 The matrices $U^*D^{-1}$ and $DL^*$ are respectively unit lower triangular and upper triangular. Thus the first part holds by the uniqueness of the LU factorization. The second part is clear, and the last one follows by writing down the formula \eqref{eq:genupdate} for $T$ and $T^*$.
\end{proof}
Therefore, there is much in common between the GKO algorithm on $T$ and $T^*$, and the two can be carried on simultaneously saving a great part of the computations involved. Moreover, in the same way as we obtain $(U^{-1})_{1 \col k,k}$, we may also get at the $k$th step its equivalent for $T^*$, i.e., $((DL^*)^{-1})_{1 \col ,k}=p_{k}(L^{-1})_{k,1 \col k}$. Since $p_k$, the $k$th pivot, is also known, this allows to recover $(L^{-1})_{k,1\col k}$.

Thus we have shown a way to recover both $(U^{-1})_{1 \col k,k}$ and $(L^{-1})_{k,1 \col k}$ at the $k$th step of the extended matrix algorithm, and this allows to compute the $k$th summand of \eqref{sumdiag} for each $i$.

\paragraph{Pivoting} How does pivoting affect this scheme for the computation of $\diag(T^{-1})$? If $T=PLU$, formula \eqref{sumdiag} becomes
\begin{equation}\label{permvers}
 (T^{-1})_{i,i}=\sum_k (U^{-1})_{i,k} (L^{-1}P^{-1})_{k,i}, \quad i=1,\dots,n.
\end{equation}
The permutation matrix $P$, of which we already have to keep track during the algorithm, acts on $L^{-1}$ by scrambling the column indices $i$, so this does not affect our ability to reconstruct the diagonal, as we still have all the entries needed to compute the $k$th summand at each step $k$. We only need to take care of the order in which the elements of $(U^{-1})_{1 \col k,k}$ and $(L^{-1})_{k,1 \col k}$ are paired in \eqref{permvers}.

The complete algorithm, which includes pivoting, is reported here as \autoref{a:inv}.
\begin{algorithm}[ht]
\begin{algorithmic}
\REQUIRE $G \in \C^{n \times r},\,B\in\C^{r\times n},\,s \in \C^n$ \COMMENT{generators of the matrix $T$}
\REQUIRE $d \in \C^n$ \COMMENT{diagonal of $T$}
\REQUIRE $b \in \C^{n \times m_1},c \in \C^{m_2 \times n}$ \COMMENT{for the (optional) solution of $Tx=b$ and $yT=c$}\\
\COMMENT{temporary variables: $l,u\in\C^n$,$\sigma \in \mathbb N^n$}\\
\STATE $x \leftarrow b;y \leftarrow c$
\STATE $\sigma(i)\leftarrow i$ for all $i=1$ to $n$
\COMMENT{initializes $\sigma$ as the identity permutation}
\FOR{$k=1$ to $n-1$}
\STATE $l_{\ell}\leftarrow \frac{G_{\ell,\col}B_{\col,k}}{s_\ell-s_k}$ for all $\ell=1$ to $k-1$ 
\STATE $l_k\leftarrow d_k$
\STATE $l_{\ell}\leftarrow \frac{G_{\ell,\col}B_{\col,k}}{s_{\sigma(\ell)}-s_k}$ for all $\ell=k+1$ to $n$ 
\STATE $q\leftarrow\argmax_{\ell=k,k+1,\dots,n} \abs{l_{\ell}}$ \COMMENT{Finds pivot position}
\STATE $p\leftarrow l_q$ \COMMENT{pivot}
\IF{p=0} \PRINT 'error: singular matrix'
\ENDIF
\STATE swap $l_k$ and $l_q$; $x_{k,\col}$ and $x_{q,\col}$; $G_{k,\col}$ and $G_{q,\col}$; $\sigma(k)$ and $\sigma(q)$
\STATE $u_\ell \leftarrow \frac{G_{k,\col}B_{\col,\ell}}{s_{\sigma(k)}-s_{\sigma(\ell)}}$ for all $\ell=k+1$ to $n$, $\ell\neq q$
\COMMENT{``extended matrix'' computations for $T^*$}
\STATE $u_{\ell}\leftarrow \frac{G_{k,\col}B_{\col,\ell}}{s_{\sigma(k)}-s_\ell}$ for all $\ell=k+1$ to $n$, $\ell\neq q$
\STATE $u_q\leftarrow d_q$ \COMMENT{non-reconstructible entry that moved off-diagonal after pivoting}

\STATE $x_{k,\col}\leftarrow p^{-1}x_{k,\col}$; $x_{\ell,\col}\leftarrow x_{\ell,\col}-l_{\ell}x_{k,\col}$ for all $\ell \neq k$
\STATE $G_{k,\col}\leftarrow p^{-1}G_{k,\col}$; $G_{\ell,\col}\leftarrow G_{\ell,\col}-l_{\ell}G_{k,\col}$ for all $\ell \neq k$
\STATE $B_{\col,k}\leftarrow p^{-1}B_{\col,k}$; $B_{\col,\ell}\leftarrow B_{\col,\ell}-B_{\col,k}u_{\ell}$ for all $\ell \neq k$
\COMMENT{the update is performed in the ``extended matrix'' fashion for $B$ and $y$ too}
\STATE $y_{\col,k}\leftarrow p^{-1}y_{\col,k}$; $y_{\col,\ell}\leftarrow y_{\col,\ell}-y_{\col,k}u_{\ell}$ for all $\ell \neq k$

\STATE $d_\ell\leftarrow d_\ell-p^{-1}l_\ell u_\ell$ for all $\ell=k+1$ to $n$
\IF[$d_q$ may be reconstructible after the pivoting --- but we store it explicitly anyway]{$q\neq k$}
\STATE $d_q \leftarrow \frac{G_{q,\col}B_{\col,q}}{s_{\sigma(q)}-s_q}$
\ENDIF
\STATE $l_k \leftarrow -1;u_k \leftarrow -1;d_k \leftarrow 0$\COMMENT{prepares to overwrite $d(k)$ with the diagonal of the inverse}
\STATE $u_\ell \leftarrow 0$ for $\ell=k+1$ to $n$ \COMMENT{permutes the entries of $u$ to create the right matching for the update of the formula\eqref{permvers}. Notice that the variable $u$ holds now the entries of $(L^{-1}P^{-1})_{k,\ell}$ and $l$ holds  the entries of $(U^{-1})_{\ell,k}$}
\STATE $u_{\sigma(\ell)}\leftarrow u_{\ell}$ for all $\ell=1$ to $n$
\STATE $d_\ell \leftarrow d_\ell+p^{-1}l_\ell u_\ell$ for all $\ell=1$ to $k$
\ENDFOR
\COMMENT{continues in the next page}
\end{algorithmic}
\caption{Computing the representation of $T^{-1}$ (and solving systems with matrix $T$ and $T^*$)}\label{a:inv}
\end{algorithm}
\begin{algorithm}
\begin{algorithmic}
\STATE \COMMENT{continues: last step ($k=n$) of the algorithm}\\

\STATE $l_{\ell}\leftarrow \frac{G_{\ell,\col}B_{\col,n}}{s_\ell-s_n}$ for all $\ell=1$ to $n-1$
\STATE $u_{\ell}\leftarrow \frac{G_{n,\col}B_{\col,\ell}}{s_{\sigma(n)}-s_{\sigma(\ell)}}$ for all $\ell=1$ to $n-1$
\STATE $p\leftarrow d_n$
\STATE $x_{n,\col}\leftarrow p^{-1}x_{n,\col}$; $x_{\ell,\col}\leftarrow x_{\ell,\col}-l_{\ell}x_{n,\col}$ for all $\ell=1$ to $n-1$
\STATE $G_{n,\col}\leftarrow p^{-1}G_{n,\col}$; $G_{\ell,\col}\leftarrow G_{\ell,\col}-l_{\ell}G_{n,\col}$ for all $\ell=1$ to $n-1$
\STATE $B_{\col,n}\leftarrow p^{-1}B_{\col,n}$; $B_{\col,\ell}\leftarrow B_{\col,\ell}-B_{\col,n}u_{\ell}$ for all $\ell=1$ to $n-1$
\STATE $y_{\col,n}\leftarrow p^{-1}y_{\col,n}$; $y_{\col,\ell}\leftarrow y_{\col,\ell}-y_{\col,n}u_{\ell}$ for all $\ell=1$ to $n-1$

\STATE $l_n \leftarrow -1;u_n \leftarrow -1;d_n \leftarrow 0$
\STATE $u_{\sigma(\ell)}\leftarrow u_{\ell}$ for all $\ell=1$ to $n$
\STATE $d_\ell \leftarrow d_\ell+p^{-1}l_\ell u_\ell$ for all $\ell=1$ to $n$

\STATE $y_{\col,\sigma(\ell)}=y_{\col,\ell}$ for all $\ell=1$ to $n$
\COMMENT{undoes the pivoting on the rows of $y$ and $B$}
\STATE $B_{\col,\sigma(\ell)}=B_{\col,\ell}$ for all $\ell=1$ to $n$
\RETURN $G,B,s,d$ \COMMENT{generators of the inverse}
\RETURN $x,y$ \COMMENT{solutions of $Tx=b$ and $yT=c$}

\end{algorithmic}
\addtocounter{algorithm}{-1}
\caption{(continued) Computing the representation of $T^{-1}$ (and solving systems with matrix $T$ and $T^*$)}
\end{algorithm}
\paragraph{Comments} It is worth noting with the same run of the GKO algorithm we can compute $\diag(T^{-1})$ and solve linear systems with matrices $T$ and $T^*$, as the two algorithms share many of their computations. In particular, the solutions of the two systems giving $T^{-1}G$ and $BT^{-1}$, which are the generators of $T^{-1}$, are computed by the algorithm with no additional effort: the transformations on $G$ and $B$ needed to solve them are exactly the ones that are already performed by the factorization algorithm. Also observe that, since the computation of $(T^{-1})_{i,i}$ spans steps $i$ to $n$, while $d_i$ is needed from step 1 to step $i$, we may reuse the vector $d$ to store the diagonal of the inverse. The resulting algorithm has a total computational cost of $(8r+2m_1+2m_2+5)n^2$ ops, if we solve at the same time a system $Tx=b$ with $b \in \C^{n \times m_1}$ and a system $yT=c$ with $c \in \C^{m_2 \times n}$ (otherwise just set $m_1=0$ and/or $m_2=0$). The only extra storage space needed is that used for $u$, $l$ and $\sigma$, i.e., the space required to store $2n$ real numbers and $n$ integer indices.

Another observation is that we did not actually make use of the fact that the diagonal of $T$ is non-reconstructible: in principle, this approach works even if $C$ is a Cauchy matrix with respect to two different node vectors $t$ and $s$. This might be useful in cases in which we would rather not compute explicitly the diagonal elements, e.g. because $t_i-s_i$ is very small, and thus would lead to ill-conditioning.

\section{Numerical experiments}\label{s:experiments}
\paragraph{Speed measurements} 
The speed experiments were performed on a FORTRAN 90 implementation of the proposed algorithms. The compiler used was the \texttt{lf95} FORTRAN compiler version 6.20c, with command-line options \texttt{-o2 -tp4 -lblasmtp4}. The experiments took place on two different computers:
\begin{description}
 \item [C1] a machine equipped with four Intel\textregistered{} Xeon\texttrademark{} 2.80Ghz CPUs, each equipped with 512kb of L2 cache, and 6 GB of RAM. Since we did not develop a parallel implementation, only one of the processors was actually used for the computations.
 \item [C2] a machine equipped with one Intel\textregistered{} Pentium\textregistered{} 4 2.80Ghz CPU with 1024kb of L2 cache, and 512Mb of RAM.
\end{description}

As an indicative comparison with a completely different algorithm, with different stability characteristics, we also reported the computational time for the solution of a (different!) Toeplitz system of the same size with the classical TOMS729 routine from Chan and Hansen, which is a Levinson-based Toeplitz solver. For C1, we reported the times of the Matlab backslash solver as a further comparison.

The results are shown in \autoref{time}.
\begin{table}[ht]
C1
 \begin{tabular}{c|ccc|cc}
n & $O(n^2)$-space GKO & Extended matrix & Downdating & TOMS729 & Backslash\\
\hline\\
128 & 1.4868e-03 & 2.0160e-03 & 2.1330e-03 & 9.3602e-04 & 3.4408e-03\\
256 & 7.7040e-03 & 7.6757e-03 & 8.1744e-03 & 3.5895e-03 & 1.6893e-02\\
512 & 6.0557e-02 & 2.9159e-02 & 3.0104e-02 & 1.3818e-02 & 8.4146e-02\\
1024 & 2.4182e-01 & 1.1922e-01 & 1.2351e-01 & 5.4675e-02 & 4.4138e-01\\
2048 & 9.6240e-01 & 4.5570e-01 & 4.6561e-01 & 2.5636e-01 & 2.6087e+00\\
4096 & 4.6003e+00 & 1.8935e+00 & 1.9060e+00 & 1.0242e+00 & 1.6023e+01\\
8192 & 3.0398e+01 & 9.2616e+00 & 8.1778e+00 & 5.4107e+00 &   Out of memory\\
16384 &   Out of memory & 4.0561e+01 & 3.6369e+01 & 2.3309e+01 &   Out of memory\\
32768 &   Out of memory & 1.9054e+02 & 1.6358e+02 & 1.0447e+02 &   Out of memory\\
65536 &   Out of memory & 7.9284e+02 & 7.0361e+02 & 4.1717e+02 &   Out of memory\\
 \end{tabular}
\medskip

C2
 \begin{tabular}{c|ccc|c}
  n & $O(n^2)$-space GKO & Extended matrix & Downdating & TOMS729\\
\hline\\
128 & 1.5779e-03 & 2.1675e-03 & 2.2298e-03 & 4.4989e-04\\
256 & 6.1923e-03 & 8.1009e-03 & 8.1240e-03 & 1.5978e-03\\
512 & 6.3149e-02 & 3.1964e-02 & 3.1461e-02 & 5.9812e-03\\
1024 & 3.0802e-01 & 1.2740e-01 & 1.2386e-01 & 2.3439e-02\\
2048 & 1.2481e+00 & 5.1163e-01 & 4.9413e-01 & 9.7404e-02\\
4096 & 4.8710e+00 & 2.0433e+00 & 1.9761e+00 & 3.8726e-01\\
8192 &   Out of memory & 8.4085e+00 & 8.0465e+00 & 1.5995e+00\\
16384 &   Out of memory & 3.7214e+01 & 3.3758e+01 & 7.3925e+00\\
32768 &   Out of memory & 1.9180e+02 & 1.5883e+02 & 4.8315e+01\\
65536 &   Out of memory & 9.0789e+02 & 7.9906e+02 & 2.6792e+02\\
\end{tabular}

\caption{Speed experiments: CPU times in seconds for the solution of Cauchy-like/Toeplitz linear systems with the different algorithms}\label{time}
\end{table}

\paragraph{Comments} It is clear from the table that two different behaviors arise for different sizes of the input. For small values of $n$, the winner among the GKO variants is the traditional $O(n^2)$-space algorithm, due to its lower computational cost of $(4r+2m+1)n^2$ instead of $(6r+2m+\frac 32)n^2$ (for these tests, $r=2$, $m=1$). As the dimension of the problem increases, cache efficiency starts to matter, and the traditional algorithm becomes slower than its counterparts. This happens starting from $n\approx256-512$. Quick calculations show that the memory occupation of the full $n \times n$ matrix is 512kb for $n=256$ and 2Mb for $n=512$, so the transition takes indeed place when the $O(n^2)$ algorithm starts to suffer from cache misses. 

The three GKO variants are slower than TOMS729; this is also due to the fact that the implementation of the latter is more mature than the GKO solvers we developed for this test; it uses internally several low-level optimizations such as specialized BLAS routines with loop unrolling.

\paragraph{Accuracy measurements} For the accuracy experiments, we chose four test problems, the first two inspired from Boros, Kailath and Olshevsky \cite{bko}, the third taken from Gohberg, Kailath and Olshevsky \cite{gko95}, and the fourth from Sweet \cite{sweet} (with a slight modification).
\begin{description}
 \item [P1] is a Cauchy-like matrix with $r=2$, nodes $t_i=a+ib$, $s_j=jb$ for $a=1$ and $b=2$, and generators $G$ and $B$ such that $G_{i,1}=1$, $G_{i,2}=-1$, $B_{1,j}=(-1)^j$, $B_{2,j}=2$. It is an example of a well-conditioned Cauchy-like matrix; in fact, for $n=512$, its condition number (estimated with the \matlab{} function \lstinline{condest}) is 4E+02, and for $n=4096$ it is 1.3E+03.
 \item [P2] is the same matrix but with $a=1$ and $b=-0.3$. It is an ill-conditioned Cauchy like matrix; in fact, the condition number estimate is 1E+17 for $n=512$ and 5E+20 for $n=4096$.
\item [P3] is the Gaussian Toeplitz matrix \cite{gko95}, i.e., the Toeplitz matrix defined by $T_{i,j}=a^{(i-j)^2}$, with size $n=512$ and different choices of the parameter $a \in (0,1)$. It is an interesting test case, since it is a matrix for which the Levinson-based Toeplitz solvers are unstable \cite{gko95}. Its condition number estimate is 7E+09 for $a=.90$ and 3E+14 for $a=0.93$.
\item [P4] is a Cauchy-like matrix for which generator growth is expected to occur \cite{sweet}. We chose $n=128$, the same nodes as \textbf{P1}, and generators defined by $G=[a,\,a+\varepsilon f]$, $B=[a+\varepsilon g,\,-a]^T$, where $\varepsilon=10^{-12}$ and $a,f,g$ are three vectors with random entries uniformly distributed between 0 and 1 (generated with the Matlab \lstinline{rand} function). Notice that the absolute values of the entries of $GB$ is about 1e-12, and their relative accuracy is about 1e-04.
\end{description}

For \textbf{P1} and \textbf{P2}, we chose several different values of $n$, for each of them we computed the product $v=Ce$ (where $e=[1 1 \dots 1]^T$) with the corresponding matrix $C$, and applied the old and new GKO algorithms to solve the system $Cx=v$. We computed the relative error as
\begin{equation}\label{relativeerror}
 err=\frac{\norm{x-e}}{\norm{e}}.
\end{equation}
As a comparison, for \textbf{P2} we also reported the accuracy of Matlab's unstructured solver (backslash), which is an $O(n^3)$ algorithm based on Gaussian elimination.

For \textbf{P3}, we solved the problem $Tx=v$, with $T$ the Gaussian Toeplitz matrix and $v=Te$, for different values of the parameter $a$, with several different methods: reduction to Cauchy-like form followed by one of the three GKO-Cauchy solvers presented in this papers, Matlab's backslash, and the classical Levinson Toeplitz solver TOMS729 by Chan and Hansen \cite{toms729}. The errors reported in the table are computed using the formula \eqref{relativeerror}.

For \textbf{P4}, we generated five matrices, with the same size and parameters but different choices of the random vectors $a$ and $f$. We used the same right-hand side and error formula as in the experiments \textbf{P1} and \textbf{P2}. The condition number estimates of the matrices are reported as well.

The results are shown in \autoref{accuracy}.
\begin{table}[ht]
P1
 \begin{tabular}{c|ccc}
  $n$ & $O(n^2)$-space GKO & Extended matrix & Downdating\\
\hline
128 & 1.262497e-15 & 1.160020e-15 & 1.062489e-15\\
256 & 1.520695e-15 & 1.812447e-15 & 1.463218e-15\\
512 & 2.979162e-15 & 3.063677e-15 & 3.091645e-15\\
1024 & 2.790466e-15 & 3.429299e-15 & 3.068041e-15\\
2048 & 4.568803e-15 & 5.921849e-15 & 5.044874e-15\\
4096 & 5.231503e-15 & 7.448194e-15 & 5.461259e-15\\
8192 & 7.491095e-15 & 1.250913e-14 & 7.287788e-15\\
16384 & Out of memory & 1.648221e-14 & 1.154215e-14\\
32768 & Out of memory & 2.624266e-14 & 1.757211e-14\\
65536 & Out of memory & 3.929339e-14 & 2.209921e-14\\
 \end{tabular}

\medskip

P2
 \begin{tabular}{c|ccc|c}
  $n$ & $O(n^2)$-space GKO & Extended matrix & Downdating & Backslash\\
\hline
128  & 4.226744e-05 & 4.226745e-05 & 4.226745e-05 & 6.943135e-05\\
256  & 2.498321e-03 & 2.498321e-03 & 2.498321e-03 & 1.681902e-03\\
512  & 1.307574e-01 & 1.307574e-01 & 1.307574e-01 & 2.151257e-01\\
1024 & 1.634538e+01 & 1.634538e+01 & 1.634538e+01 & 1.872503e+01\\
2048 & 4.367616e+02 & 4.367616e+02 & 4.367616e+02 & 2.341069e+03\\
4096 & 2.311074e+04 & 2.311075e+04 & 2.311075e+04 & 1.124867e+03\\
 \end{tabular}

\medskip

P3
 \begin{tabular}{c|ccc|c|c}
  $a$ & $O(n^2)$-space GKO & Extended matrix & Downdating & Backslash & TOMS729\\
\hline
0.85 & 2.916298e-10 & 1.584459e-10 & 1.960486e-10 & 3.083869e-11 & 1.631254e-10\\
0.87 & 7.080698e-10 & 6.933175e-10 & 6.234554e-10 & 3.672145e-10 & 2.736550e-09\\
0.90 & 1.928754e-07 & 2.741270e-07 & 1.807345e-07 & 1.402849e-07 & 3.122989e-06\\
0.91 & 2.690618e-04 & 1.645149e-04 & 2.647343e-04 & 1.359575e-06 & 1.208559e-04\\
0.92 & 8.092059e-05 & 1.165346e-04 & 1.540948e-04 & 4.638024e-05 & 1.023501e-02\\
0.93 & 5.766805e-03 & 6.569097e-03 & 6.182359e-03 & 2.532194e-03 & 2.486232e+00\\
0.94 & 3-767035e-01 & 2.111118e+00 & 2.837602e-01 & 1.116684e+00 & 1.767069e+03\\
 \end{tabular}

\medskip

P4
\begin{tabular}{ccc|c}
  $O(n^2)$-space GKO & Extended matrix & Downdating & \texttt{condest(C)}\\
\hline
1.678593e-01 & 1.678593e-01 &      1.681295e-01 & 4e+04\\
1.170304e-01 &      1.160042e-01 &      1.125991e-01 & 4e+03\\
2.805922e+01 &     2.800674e+01 &      2.797860e+01 & 1e+05\\
5.552484e-02 &      5.173933e-02 &      5.568090e-02 & 1e+04\\
6.540661e-02 &      6.760469e-02 &      7.393447e-02 & 1e+03
\end{tabular}
 
\caption{Relative forward errors}\label{accuracy}
\end{table}
\paragraph{Comments}
There are no significant differences in the accuracy of the three variants of GKO. This shows that, at least in our examples, despite the larger number of operations needed, the space-efficient algorithms are as stable as the original GKO algorithm. On nearly all examples, the stability is on par with that of Matlab's backslash operator. When applied to critical Toeplitz problems, the GKO-based algorithms can achieve better stability results than the classical Levinson solver TOMS729.

In the generator growth case \textbf{P4}, the accuracy is very low, as expected from the theoretical bounds; nevertheless, there is no significant difference in the accuracy of the three versions.

We point out that a formal stability proof of the GKO algorithm cannot be established, since it is ultimately based on Gaussian elimination with partial pivoting, for which counterexamples to stability exist, and since in some limit cases there are other issues such as generators growth \cite{sweet}: i.e., the growth of the elements of $G$ and $B$ (but not of $L$ and $U$) along the algorithm. 
However, both computational practice and theoretical analysis suggest that the GKO algorithm is in practice a reliable algorithm \cite{olshevsky}. Several strategies, such as the one proposed by Gu \cite{gu95}, exist in order to avoid generator growth, and they can be applied to both the original GKO algorithm and its space-efficient versions. The modified versions of GKO are not exempt from this stability problem, since they perform the same operations as the original one plus some others; nevertheless, when performing the numerical experiments for the present paper, we encountered no case in which the $O(n)$-space algorithms suffer from generator growth while the original version does not.

\paragraph{A posteriori accuracy test}
We tested on the experiment \textbf{P2} the \emph{a posteriori} accuracy test  mentioned at the end of \autoref{s:dd}; i.e., solving the system with the downdating approach and then comparing the values of $B$ before and after the algorithm. In \autoref{t:apost}, we report the value of the relative error $\norm{B-B'}/\norm{B}$, where $B'$ is the value of the variable initially holding the second generator $B$ at the end of the algorithm. We compare it with the relative residual $\norm{C \tilde x-b}/\norm{b}$, where $C$ and $b$ are the system matrix and right-hand side of the experiment \textbf{P2}, and $\tilde x$ is the solution computed by the downdating algorithm.
\begin{table}P2
 \begin{tabular}{c|cc}
 $n$ & \emph{A posteriori} test & Relative residual\\
 \hline
 128  & 2.6678472e-12 & 2.0294501e-13\\
 256  & 5.6104849e-12 & 4.2803993e-13\\
 512  & 8.8302484e-12 & 1.6978596e-12\\
 1024 & 1.4490236e-10 & 2.9394916e-09\\
 2048 & 6.5423206e-10 & 4.9881189e-07\\
 4096 & 6.2470657e-10 & 3.5584063e-05\\
 \end{tabular}
 \caption{Accuracy of the \emph{a posteriori} accuracy test}\label{t:apost}
\end{table}
At least in this experiment, the proposed test is not able to capture the instability of the algorithm; the computation of the relative residual is more accurate as an \emph{a posteriori} test to estimate the accuracy of the solution.

\paragraph{Speed comparison with Matlab's backslash}
\begin{table}
P2
\begin{tabular}{c|ccc}
 $n$ & Downdating(Matlab) & Downdating(Fortran) & Backslash(assembling+solving) \\
\hline
128  & 4.831e-02 & 2.046e-03 & 1.446e-02 + 2.703e-03\\
256  & 9.224e-02 & 7.519e-03 & 1.945e-02 + 1.237e-02\\
512  & 2.125e-01 & 2.783e-02 & 4.016e-02 + 6.624e-02\\
1024 & 5.809e-01 & 1.114e-01 & 1.224e-01 + 3.527e-01\\
2048 & 1.849e+00 & 4.329e-01 & 4.518e-01 + 2.215e+00\\
4096 & 7.099e+00 & 1.732e+00 & 1.910e+00 + 1.426e+01\\
\end{tabular}
\caption{CPU times (in seconds) on the machine \textbf{C1} for the Matlab and Fortran implementation of downdating and for the Matlab backslash operator}\label{vsbackslash}
\end{table}
We have compared the speed of the Matlab and Fortran implementations of downdating GKO with the cost of assembling the full matrix $C$ in Matlab and solving the system with the backslash operator. The results are in \autoref{vsbackslash}. The experiments were performed on \textbf{C1}, and the version of Matlab used was 7 (R14) SP1.

The comparison is not meant to be fair: on one hand, we are testing a $O(n^2)$ and a $O(n^3)$ algorithm; on the other, we are comparing an interpreted program, a compiled program and a call to a native machine-code library within Matlab. Starting from $n=2048$, even the Matlab version of the downdating algorithm is faster than backslash: for large values of $n$, the overhead of processing $O(n)$ instructions with the Matlab interpreter is amortized.

\paragraph{Inversion of Trummer-like matrices} We shall now turn to testing the algorithm for the structured inversion of Trummer-like matrices proposed in \autoref{s:trummer}. We chose two experiments, one with well-conditioned matrices and one with ill-conditioned ones. Notice that not all possible choices of the generators $G$ and $B$ are admissible for a Trummer-like matrix, since the displacement equation implies $G_{i,\col}B_{\col,i}=0$ for all $i$.
\begin{description}
\item[T1] The $n\times n$ Trummer matrix $T$ with $T_{i,i}=1$, nodes defined by $s_i=i/n$ and generators defined by $G_{i,1}=i$, $G_{i,2}=-1$, $B_{1,i}=\cos(\pi i/n)$, $B_{i,2}=G_{i,1}B_{1,i}$ for all $i=1,\dots,n$.
\item[T2] The $512\times 512$ diagonal-plus-rank-1 matrix depending on a parameter $\varepsilon$ and defined by $T=(1+\varepsilon)I-uu^T$, with $u=v/\norm{v}$ and $v_i=i/n$ for all $i=1,\dots,n$. Its inverse can be computed explicitly as $(1+\varepsilon)^{-1}(I+\varepsilon^{-1}uu^T)$, and its condition number is $\varepsilon^{-1}+1$. A diagonal-plus-rank-1 matrix is Trummer-like with $r=2$ with respect to any set of nodes; in this experiment, we used the node vector defined by $s_i=a+ib$ for all $i=1,\dots,n$, with $a=1$, $b=-0.3$.
\end{description}
We computed the relative errors
\[
 E_1=\frac{\norm{d'-d''}}{\norm{d''}},\,E_2=\frac{\norm{G'-G''}}{\norm{G''}}+\frac{\norm{B'-B''}}{\norm{B''}},\,E_3=\frac{\norm{T'-T''}}{\norm{T''}},
\]
where $G',B',d',T'$ are the generators, diagonal and full inverse computed by \autoref{a:inv}, and $G'', B'', d'', T''$ are their reference values computed with Matlab's function \lstinline{inv} for \textbf{T1} and with the exact formula for the inverse for \textbf{T2}. The results are reported in \autoref{t:inv}.
\begin{table}
T1
 \begin{tabular}{c|ccc|c}
  $n$ & $E_1$ & $E_2$ & $E_3$ & \texttt{condest(T)}\\
  \hline
128 &   5.4547208e-16&   1.1319021e-14&   2.9390472e-15&   5.8720426e+02\\256 &   7.0728226e-16&   2.8467666e-14&   7.4006119e-15&   1.1929568e+03\\512 &   9.3436757e-16&   4.1000919e-14&   1.1661137e-14&   2.4043890e+03\\1024 &   1.3348516e-15&   1.2770834e-13&   2.9260324e-14&   4.8272132e+03\\2048 &   1.8731364e-15&   2.3347806e-13&   5.6916515e-14&   9.6728404e+03\\4096 &   2.7481278e-15&   2.5345941e-13&   7.9356066e-14&   1.9364084e+04\\  
 \end{tabular}
 T2
 \begin{tabular}{c|ccc}
  $\varepsilon$ & $E_1$ & $E_2$ & $E_3$\\
  \hline
  1e-03 &
   2.2655145e-11
&
   5.9001177e-11
&
   3.0152973e-11
\\
1e-06 &
   4.0447578e-08
&
   8.0919137e-08
&
   4.1084327e-08
\\
1e-09 &
   4.0899169e-05
&
   8.1796690e-05
&
   4.1263900e-05
\\
1e-12 &
   3.2571481e-02
&
   6.6239581e-02
&
   3.2914231e-02
\\
1e-15 &
   1.4160667e+00
&
   4.8195274e+00
&
   1.7288419e+00
\\
 \end{tabular}
 \caption{Accuracy of the algorithm for inverting Trummer-like matrices}\label{t:inv}
\end{table}
In both cases, the algorithm is able to reach good accuracy, compatibly with the restrictions imposed by the condition number of the matrices.

\paragraph{Code availability} Fortran and \matlab{} implementations of the algorithms presented here are available online on \url{http://arxiv.org/e-print/0903.4569} along with the e-print of this paper.

\section{Conclusions}\label{s:conclusions}
In this paper, we proposed a new $O(n)$-space version of the GKO algorithm for the solution of Cauchy-like linear systems. Despite the slightly larger number of operations needed, this algorithm succeeds in making a better use of the internal cache memory of the processor, thus providing an improvement with respect to both the customary GKO algorithm and a similar $O(n)$-space algorithm proposed by Rodriguez \cite{rod}, \cite{ar}. Starting from $n \approx 500-1000$, the algorithm outperforms these two versions of GKO. When applying this algorithm to the special case of inversion of Trummer-like matrices, several small optimizations reduce the total number of operations needed.

\paragraph{Acknowledgements} The author would like to thank Antonio Aric\`o, Dario Bini and Vadim Olshevsky for helping him with many useful discussions on the subject of this paper. The anonymous referees suggested some additional numerical experiments and made several suggestions which greatly helped in improving the presentation of the  paper.

\bibliographystyle{abbrv}
\bibliography{cauchy}

\end{document}